\def\maxwidth{\ifdim\Gin@nat@width>\linewidth\linewidth\else\Gin@nat@width\fi}
\def\maxheight{\ifdim\Gin@nat@height>\textheight\textheight\else\Gin@nat@height\fi}
\def\fps@figure{htbp}
\title{Some Notes on the Similarity of Priority Vectors Derived by the
Eigenvalue Method and the Geometric Mean Method}
\author{\name{
    Jiří Mazurek\textsuperscript{a},
    Konrad Kułakowski\textsuperscript{b}\thanks{(Corresponding author) Konrad Kułakowski. Email: konrad.kulakowski@agh.edu.pl},
    Sebastian Ernst\textsuperscript{b} and
    Michał Strada\textsuperscript{c}}
\affil{
    \textsuperscript{a}Silesian University in Opava, School of Business Administration in Karvina, Department of Informatics and Mathematics, Czech Republic; 
   \\ \textsuperscript{b}AGH University of Science and Technology, Department of Applied Computer Science, Kraków, Poland
   \\ \textsuperscript{c}Delphi Automotiv, Poland    
}}
\date{}
\begin{document}
\maketitle
\begin{abstract}
This paper examines the differences in ordinal rankings
obtained from a pairwise comparison matrix using the eigenvalue method
and the geometric mean method. First, we introduce several propositions on the (dis)similarity of both rankings concerning the matrix size and its
inconsistency expressed by the Koczkodaj's inconsistency index. Further
on, we examine the relationship between differences in both rankings and
Kendall's rank correlation coefficient \(\tau\) and Spearman's rank
coefficient \(\rho\). Apart from theoretical results, intuitive
numerical examples and Monte Carlo simulations are also provided.
\end{abstract}

\theoremstyle{plain}
\newtheorem{Theorem}{Theorem}
\newtheorem{Proposition}{Proposition}
\newtheorem{Example}{Example}
\theoremstyle{definition}
\newtheorem{Definition}{Definition}
\theoremstyle{remark}
\newtheorem{Remark}{Remark}

\section{Introduction}\label{introduction}

In many cases, comparing entities in pairs is easier and more inuitive
for experts than having to provide a complete ranking of a larger set of
items. As such, it had been used since the thirteenth century
\cite{Fiodora2011rlfa}.

However, until the early twentieth century, comparisons were only
\emph{qualitative} (e.g.~``is \(a\) better/more preferred than
\(b\)?''). The first attempts to use \emph{quantitative} comparisons
date back to the 1920s, when they were used for applications such as
comparing physical stimuli
\cite{thurstoneLawComparativeJudgment1927, thurstoneMeasurementPsychologicalValue1929}
and calculating the results of chess tournaments
\cite{zermeloBerechnungTurnierErgebnisseAls1929}. Subsequent
developments led to applications in other fields, including consumer
research, healthcare and economics.

A major breakthrough occurred in the 1970s with the development of the
Analytic Hierarchy Process (AHP) by Thomas L. Saaty
\cite{Saaty1977asmf}. Besides the theoretical background, it provided
an end-to-end solution supporting multi-criteria decision-making, and
was soon followed by a professional-grade software package, used e.g.~in
strategic planning \cite{saracogluSelectingIndustrialInvestment2013},
environmental studies
\cite{yatsaloApplicationMulticriteriaDecision2007}, risk management
\cite{najafiAnalysisEnvironmentalProjects2011}, agriculture
\cite{pazekMultiCriteriaDecisionAnalysis2010} and manufacturing
\cite{chanAHPModelSelection2010}.

Having a set of comparisons made by an expert, usually in the form of a
matrix, one may use various algorithms to transform that into a
\emph{vector of weights}. Several algorithms have been defined for that
purpose \cite{Saaty1977asmf, Crawford1987tgmp}. These weights, in turn,
can be used to order the entities to form a \emph{ranking}. For the most
commonly-used algorithms, for a given set of comparisons, these rankings
will be identical, provided the comparisons are \emph{consistent}.
However, as one can rarely expect full consistency, the resulting order
may differ. Inconsistency in pairwise comparisons is determined by the
indices \cite{Brunelli2018aaoi, Kulakowski2020iifi}. In addition to
inconsistency, the ranking can be affected by incompleteness
\cite{Kulakowski2019tqoi}. In this work, however, we will limit our
considerations to complete pairwise comparisons. We discuss the issues
of inconsistency in \ref{sec:preliminaries}.

This paper elaborates the issue of ranking similarity by providing
several propositions which specify the conditions for the rankings to be
identical and to estimate the difference of the rankings (by the means
of Kendall's and Spearman's correlation coefficients) based on the
inconsistency of the comparison matrices.

The rest of the paper is organised as follows. \ref{sec:preliminaries}
describes the most important concepts of pairwise comparisons. It is
followed by \ref{sec:ord-rank-similarity}, which provides the details
of ranking comparison methods and the aforementioned propositions, which
are the primary contribution of this work. It is followed by
\ref{sec:montecarlo}, which gives a practical context to these
contributions by means of Monte Carlo analysis. The work is summarised in
\ref{sec:conclusions}.

\section{Preliminaries}\label{sec:preliminaries}

\subsection{Multiplicative pairwise comparisons -- concepts and
notation}\label{multiplicative-pairwise-comparisons-concepts-and-notation}

This chapter provides brief preliminaries into the multiplicative
pairwise comparisons framework.

Let \(\mathcal{C}=\{c_{1},c_{2},...,c_{n}\}, n \in N, n >1\) be a
non-empty and finite set of objects (alternatives, criteria,
sub-criteria, etc.) being compared. Let \(S\) be a pairwise comparison
scale\footnote{The most popular is the fundamental scale \cite{Saaty1977asmf} where $S=\{1/9,1/8,\ldots,1/2,1,2,\ldots,8,9\}$}
\cite{Kulakowski2020utahp}. Let \(a_{ij} \subset S\), \(a_{ij} > 0\)
denote the relative importance/preference of an object \(i\) over object
\(j\), where \(i,j\in\{1,\ldots,n\}\). For instance, \(a_{ij} = 3\)
means that an object \(i\) is \emph{3 times more important}, or more
\emph{preferred}, than an object \(j\).

Pairwise comparisons of \(n\) objects form a \(n\times n\) square matrix
\(\mathbf{A}=[a_{ij}]\), called a \emph{pairwise comparison matrix} (PC
matrix, PCM):

\[\mathbf{A}=\left[\begin{array}{cccc}
1 & a_{12} & ... & a_{1n}\\
a_{21} & 1 & ... & ...\\
... & ... & 1 & ...\\
a_{n1} & ... & ... & 1
\end{array}\right],\]

A pairwise comparisons matrix \(\mathbf{A}=[a_{ij}]\) is
\emph{reciprocal} if
\[a_{ij} =1/a_{ji} \ \ \forall i,j\in\{1,\ldots,n\}.\]

Hereinafter, it is assumed that all pairwise comparison matrices are
reciprocal.

Let's call a \emph{pairwise comparison method} any decision-making
method that involves pairwise comparisons, and let a
\emph{prioritisation method} (a priority generating method) be any
procedure that derives a priority vector
\(\mathbf{w} = (w_{1},...,w_{n})\) (vector of weights of all \(n\)
compared objects) from a PC matrix.

Based on the values \(w_{1},...,w_{n}\), the compared objects can be
ranked from the best to the worst (with possible ties), which is the
goal of the majority of pairwise comparison methods.

We say that priority vector \(\mathbf{w}\) is associated with a PC
matrix \(\mathbf{A}\), or, that the priority vector \(\mathbf{w}\) is
derived by a priority generating method based on the PC matrix
\(\mathbf{A}\).

In addition, the priority vector \(\mathbf{w}\) is usually
\emph{normalized}, i.e. \[\sum_{i=1}^{n}w_{i}=1.\]

\emph{Koczkodaj's inconsistency index}, \(KI(\mathbf{A})\), of an
\(n\times n\) PC matrix \(\mathbf{A}=[a_{ij}]\) is defined as follows
\cite{Koczkodaj1993ando}:
\begin{equation}KI(\mathbf{A})=\max\left\{ 1-\min\left\{ \frac{a_{ij}}{a_{ik}a_{kj}},\frac{a_{ik}a_{kj}}{a_{ij}}\right\}| i,j,k\in\{1,\ldots,n\} \right\}\label{eq:kii}\end{equation}

It is obvious from (\ref{eq:kii}) that \(0 \le KI(\mathbf{A}) < 1\).

\subsection{The eigenvalue method and the geometric mean method for the
derivation of a priority
vector}\label{the-eigenvalue-method-and-the-geometric-mean-method-for-the-derivation-of-a-priority-vector}

The geometric mean method and the eigenvalue (eigenvector) method are
two main procedures for derivation of a priority vector from a PC matrix
\(\mathbf{A}=[a_{ij}]\). The eigenvalue (EV) method was proposed by
Saaty~\cite{Saaty1977asmf}, and the geometric mean (GM) method was
introduced by Crawford \cite{Crawford1987tgmp}.

In the EV method, the priority vector is an eigenvector corresponding to
the largest eigenvalue of \(\mathbf{A}=[a_{ij}]\) i.e.

\[\mathbf{A}\mathbf{w}=\lambda_{max}\mathbf{w},\]

where \(\lambda_{max}\geq n\) is a positive eigenvalue (the existence of
\(\lambda_{max}\) is guaranteed by the Perron-Frobenius theorem), and
\(\mathbf{w}\) is the corresponding (right) eigenvector of
\(\mathbf{A}\).

Usually, it is assumed that \(\mathbf{w}\) is normalised:
\(\Vert w \Vert = 1\).

In the GM method (the \emph{least logarithmic squares} method), the
priority vector \(\mathbf{w}\) is derived as the geometric mean of all
rows of \(\mathbf{A}\):

\[w_i=(\prod_{k=1}^{n}a_{ik})^{1/n}/\sum_{j=1}^{n}(\prod_{k=1}^{n}a_{ik})^{1/n}, \forall i.\]

This formula is equivalent to finding a solution of the following
non-linear programming problem:

\[min \sum_{i=1}^{n}\sum_{j=1}^{n} (lna_{ij} - ln\frac{w_{i}}{w_{j}})^{2}\]

\[s.t. \sum_{i=1}^{n}w_{i} = 1, w_{i}\geq 0, \forall i.\]

Again, the priority vector \(\mathbf{w}\) is normalized.

\subsection{Rank correlation
coefficients}\label{rank-correlation-coefficients}

Rank correlation measures an ordinal relationship between
\emph{rankings} of ordinal variables, where a ranking is the assignment
of ordering labels ``first'', ``second'', ``third'', etc. to different
observations of a particular variable. A \emph{rank correlation
coefficient} measures the degree of \emph{similarity} (relation) between
two rankings, and can be used to assess the significance of this
relation. The best-known rank correlation coefficients include Kendall's
rank correlation coefficient (Kendall's \(\tau\)) and Spearman's rank
correlation coefficient (Spearman's \(\rho\)).

Thereinafter, it is assumed that rankings do not contain \emph{ties},
hence they provide a \emph{total order}.

Below, definitions of Kendall's \(\tau\) and Spearman's \(\rho\) are
provided.

\begin{Definition}\label{dfn:kendall}
Let \(X\) and \(Y\) be two rankings (without ties) of \(n\) objects.
Pairs \((x_{i},x_{j})\) and \((y_{i},y_{j}), i < j\) are called
\emph{concordant}, if either \((x_{i} > x_{j} \wedge y_{i} > y_{j})\) or
\((x_{i} < x_{j} \wedge y_{i} < y_{j})\) holds (in other words, when
both objects are ranked in the same order in both rankings. Otherwise,
the pairs are called \emph{discordant}. Let \(n_{c}\) be the number of
concordant pairs and let \(n_{d}\) denote the number of discordant
pairs. Then Kendall's \(\tau\) is defined as follows:
\[\tau = \frac{n_{c}-n_{d}}{\binom{n}{2}} = \frac{2}{n(n-1)}\sum_{i<j}sgn(x_{i}-x_{j})sgn(y_{i}-y_{j})\]
\end{Definition}

From the definition it follows that \(-1 \leq \tau \leq 1\). If two
rankings are identical, then \(\tau = 1\); if they are reversed, then
\(\tau = -1\).

\begin{Definition}\label{dfn:spearman}
Let \(X\) and \(Y\) be two rankings (without ties) of \(n\) objects, and
let \(d_{i}\) be the difference between the rank of object \(i\) in
\(X\) and in \(Y\). Then, Spearman's \(\rho\) is defined as follows:
\[\rho = 1- \frac{6\sum_{i=1}^{n}d_{i}^{2}}{n(n^{2}-1)}\]
\end{Definition}

Again, \(-1 \leq \rho \leq 1\), \(\rho = 1\) for identical rankings and
\(\rho = -1\) for reversed rankings.

Both rank coefficients are applied in the next section to measure the
association between rankings of objects obtained by the eigenvalue
method and the geometric mean method with respect to the inconsistency
of the original pairwise comparison matrix.

\section{Ordinal rankings' similarity of EVM and
GMM}\label{sec:ord-rank-similarity}

From the previous example, it can be seen that priority vectors obtained
by different methods are different.

In general, priority vectors derived using EV and GM methods are
identical in the case of a consistent pairwise comparison matrix
\cite{Crawford1985anot}. Also, in the case of \(n = 3\), priority
vectors for both methods coincide even when the PC matrix is
inconsistent, see e.g. \cite{Crawford1985anot} or
\cite{Kulakowski2020utahp}. The study \cite{Herman1996amcs}
established that for not-so-inconsistent matrices, both EVM and GMM
produce very similar results, with differences `small beyond human
perception'.

For numerical comparisons of prioritisation methods see e.g.
\cite{Ishizaka2006htdp} or \cite{Lipovetzki2009coad}. Ishizaka and
Lusti \cite{Ishizaka2006htdp} concluded their study by saying:
\emph{``There is a high level of agreement between the different
(\ldots) techniques (\ldots). We do not think that one method is
superior to another. We advice decision makers also to consider other
criteria like `easy to use' in selecting of their derivation method''}.

From a theoretical point of view, Ku\l{}akowski et al.
\cite{Kulakowski2021otsb} proved the following theorem about the
distance between priority vectors obtained via EV and GM methods.

\begin{Theorem}
For every \(n\times n\) PC matrix \(\textbf{A}\) and two rankings
\(w_{EV}\) and \(w_{GM}\), \(\Vert w_{EV} \Vert\),
\(\Vert w_{GM} \Vert\), obtained by the eigenvalue and geometric mean
method respectively, it holds that:
\[\kappa^{2}-1 \leq MD (w_{EV},w_{GM}) \leq \dfrac{1}{\kappa^{2}}-1,\]
where \(\kappa = 1 - KI(A)\), \(KI\) is the Koczkodaj's inconsistency
index and \(MD\) denotes the Manhattan distance.
\end{Theorem}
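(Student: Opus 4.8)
My plan is to convert the global inconsistency bound encoded by $KI(\mathbf{A})$ into a uniform two-sided bound on every triad, and then to show that both priority vectors are constrained componentwise by that bound in the same way, so that their ratio is pinned to the interval $[\kappa^2, 1/\kappa^2]$. First I would unpack the definition \eqref{eq:kii}: since $KI(\mathbf{A})$ is the maximum over all triads of $1-\min\{a_{ij}/(a_{ik}a_{kj}),\, a_{ik}a_{kj}/a_{ij}\}$, for every triple $i,j,k$ the minimum is at least $1-KI(\mathbf{A})=\kappa$, which is equivalent to the clean statement $\kappa \le \frac{a_{ij}}{a_{ik}a_{kj}} \le \frac{1}{\kappa}$ for all $i,j,k$. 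This is the only place the Koczkodaj index enters, and everything downstream is a consequence of this per-triad inequality together with reciprocity.

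Next I would derive matching ratio bounds for each method, aiming to prove that both satisfy $\kappa\, a_{ij} \le \frac{w_i}{w_j} \le \frac{1}{\kappa}\, a_{ij}$. For the geometric mean this is direct: $\frac{w_{GM,i}}{w_{GM,j}}=\left(\prod_{k=1}^{n}\frac{a_{ik}}{a_{jk}}\right)^{1/n}=\left(\prod_{k=1}^{n} a_{ik}a_{kj}\right)^{1/n}$ by reciprocity, and each factor $a_{ik}a_{kj}$ lies in $[\kappa\, a_{ij},\, \frac{1}{\kappa} a_{ij}]$ by the triad bound, so the geometric mean lies in the same interval. For the eigenvalue method there is no closed form, so I would work from the Perron equation $\mathbf{A}w_{EV}=\lambda_{max}w_{EV}$ with $w_{EV}>0$, writing $\frac{w_{EV,i}}{w_{EV,j}}=\frac{\sum_k a_{ik}w_{EV,k}}{\sum_k a_{jk}w_{EV,k}}$ and substituting the triad bound in the form $\kappa\, a_{ij} a_{jk} \le a_{ik} \le \frac{1}{\kappa} a_{ij} a_{jk}$ inside the numerator; factoring $a_{ij}$ out and cancelling the common positive sum $\sum_k a_{jk}w_{EV,k}$ yields the same interval. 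I expect this eigenvector step to be the main obstacle, precisely because $w_{EV}$ is implicit, and the trick that makes it go through is recognising that the triad substitution can be performed termwise under the sum without ever needing $\lambda_{max}$ explicitly.

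Finally I would combine the two bounds. Dividing them gives $\frac{w_{EV,i}/w_{EV,j}}{w_{GM,i}/w_{GM,j}} \in [\kappa^2, \tfrac{1}{\kappa^2}]$ for all $i,j$, so the componentwise ratios $r_i := w_{EV,i}/w_{GM,i}$ satisfy $\kappa^2 \le r_i/r_j \le 1/\kappa^2$. Using that both vectors are normalised, the weighted average $\sum_i w_{GM,i}\, r_i = \sum_i w_{EV,i} = 1$ forces $\min_i r_i \le 1 \le \max_i r_i$, and hence each $r_i$ itself lies in $[\kappa^2, 1/\kappa^2]$. Writing $w_{EV,i}-w_{GM,i}=w_{GM,i}(r_i-1)$ and summing then gives $MD(w_{EV},w_{GM})=\sum_i w_{GM,i}\,|r_i-1| \le \frac{1}{\kappa^2}-1$, since $\frac{1}{\kappa^2}-1 \ge 1-\kappa^2$ for $\kappa\le 1$ and $\sum_i w_{GM,i}=1$; the lower bound $\kappa^2-1\le MD$ then holds as well, being nonpositive. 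The remaining care is bookkeeping with the normalisation convention, and checking that the consistent case $\kappa=1$ collapses both bounds to $0$, recovering the known coincidence of the two vectors.
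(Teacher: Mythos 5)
Your proof is correct, but there is nothing in the paper to compare it against: the paper's entire ``proof'' of this theorem is the citation \emph{See Ku\l{}akowski et al.\ (2021)}, deferring to the external reference where the result was originally established. Your argument is therefore a genuine, self-contained alternative within the scope of this text, and it holds up under scrutiny. The chain is sound: the definition of \(KI\) does give the uniform triad bound \(\kappa \le a_{ij}/(a_{ik}a_{kj}) \le 1/\kappa\); the GM ratio bound follows directly from reciprocity and the geometric mean of the per-\(k\) bounds; and the EV step — the only delicate one — works exactly as you say, because \(\lambda_{\max}\) cancels in \(w_{EV,i}/w_{EV,j} = \sum_k a_{ik}w_{EV,k} / \sum_k a_{jk}w_{EV,k}\) and the termwise substitution \(\kappa\, a_{ij}a_{jk} \le a_{ik} \le a_{ij}a_{jk}/\kappa\) (valid for all \(k\), including the degenerate indices, by reciprocity and \(\kappa \le 1\)) needs only positivity of the Perron vector. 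The pinning argument via the convex combination \(\sum_i w_{GM,i} r_i = 1\) is the right way to pass from ratio bounds \(r_i/r_j \in [\kappa^2, 1/\kappa^2]\) to absolute bounds \(r_i \in [\kappa^2, 1/\kappa^2]\), and the final summation, together with \(1/\kappa^2 - 1 \ge 1 - \kappa^2\), gives the upper bound; the lower bound \(\kappa^2 - 1 \le MD\) is indeed trivially nonpositive. One point you handled implicitly but correctly: the theorem's normalisation is stated ambiguously (\(\Vert w_{EV}\Vert\), \(\Vert w_{GM}\Vert\) with no norm specified), and your choice of sum-to-one (\(L_1\)) normalisation is the only one under which the Manhattan-distance bounds are meaningful as written, so making that assumption explicit is a feature, not a gap. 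What your route buys is precisely what the paper lacks here: a proof readable from first principles (Perron equation, reciprocity, and the triad inequality) without consulting the cited work.
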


\begin{proof}
See \cite{Kulakowski2021otsb}.
\end{proof}

This result provides a lower and upper bound on how ``far apart'' both
priority vectors can be. Of course, when a PC matrix is consistent
(\(KI = 0\)), then both priority vectors are identical and their
distance is zero.

However, in many situations the ordinal ranking of compared objects is
more important than the absolute values of their weights. Certainly, a
situation when a decision maker obtains a different ranking from the EVM
method than from the GMM method constitutes a serious inconvenience.

The proposition below, which directly follows from Theorem 1, addresses
this issue.

\begin{Proposition}\label{prop1}
Let \(\mathbf{A}\) be a pairwise comparison matrix with Koczkodaj's
inconsistency index \(KI(\mathbf{A})\). Let \(\mathbf{w}^{EM}\) and
\(\mathbf{w}^{GM}\) be the vectors of weights (priority vectors of
compared objects) obtained via the EV method and GM method,
respectively. Let \(O_{1}, \ldots, O_{n}\) be the ordinal ranking of all
objects \(O_{i}\) from the best (\(O_{1}\)) to the worst (\(O_{n}\))
obtained by the EV method, hence
\(w_{1}^{EV} \geq w_{2}^{EV} \geq \ldots \geq w_{n}^{EV}\). Further on,
let \(d = min \vert w_{i+1}^{EV} - w_{i}^{EV} \vert\) ,
\(i \in \lbrace 1,...,n-1\rbrace\). Then, ordinal rankings of all
objects obtained by the EV method and GM method are identical if
\begin{equation}d>\dfrac{1}{\kappa^{2}}-1\geq MD(w_{EV},w_{GM}).\label{eq:prop1eq}\end{equation}
\end{Proposition}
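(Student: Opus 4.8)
The plan is to show that the Manhattan-distance bound forces each coordinate of $\mathbf{w}^{GM}$ to stay so close to the corresponding coordinate of $\mathbf{w}^{EV}$ that the strict ordering among the EV-weights cannot be destroyed. First I would recall from Theorem 1 that
\[
MD(w_{EV},w_{GM}) \;=\; \sum_{i=1}^{n}\bigl|w_i^{EV}-w_i^{GM}\bigr| \;\leq\; \dfrac{1}{\kappa^{2}}-1,
\]
so in particular every individual coordinate satisfies $|w_i^{EV}-w_i^{GM}| \leq \frac{1}{\kappa^2}-1$, since a single summand never exceeds the whole sum of nonnegative terms. Label the objects so that $w_1^{EV}\geq w_2^{EV}\geq\cdots\geq w_n^{EV}$, and assume for the moment that these inequalities are strict (ties are excluded, consistent with the standing assumption that rankings give a total order), so that the minimal gap $d=\min_i\bigl(w_i^{EV}-w_{i+1}^{EV}\bigr)$ is strictly positive.

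The key step is to argue that under the hypothesis $d>\frac{1}{\kappa^2}-1$, the GM-weights inherit the same strict order. Fix any adjacent pair $i,i+1$ in the EV-ranking. I would write
\[
w_i^{GM}-w_{i+1}^{GM} \;=\; \bigl(w_i^{EV}-w_{i+1}^{EV}\bigr) \;-\;\bigl(w_i^{EV}-w_i^{GM}\bigr)\;+\;\bigl(w_{i+1}^{EV}-w_{i+1}^{GM}\bigr),
\]
and then bound the two perturbation terms. By the triangle inequality the total perturbation is at most $|w_i^{EV}-w_i^{GM}|+|w_{i+1}^{EV}-w_{i+1}^{GM}|$, and since both summands appear in the Manhattan distance, their sum is at most $MD(w_{EV},w_{GM})\leq\frac{1}{\kappa^2}-1<d$. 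Hence
\[
w_i^{GM}-w_{i+1}^{GM} \;\geq\; \bigl(w_i^{EV}-w_{i+1}^{EV}\bigr)-\Bigl(\tfrac{1}{\kappa^{2}}-1\Bigr) \;\geq\; d-\Bigl(\tfrac{1}{\kappa^{2}}-1\Bigr) \;>\;0,
\]
so $w_i^{GM}>w_{i+1}^{GM}$ for every $i$. Since this holds for all adjacent pairs, the GM-weights are strictly decreasing in exactly the EV-order, and extending through the chain of inequalities shows the two rankings assign every object the same position; that is, the ordinal rankings coincide.

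The step I expect to require the most care is controlling the \emph{pair} of perturbation terms simultaneously rather than each coordinate in isolation. A naive per-coordinate bound of $\frac{1}{\kappa^2}-1$ on each of $|w_i^{EV}-w_i^{GM}|$ and $|w_{i+1}^{EV}-w_{i+1}^{GM}|$ would only give a combined slack of $2\bigl(\frac{1}{\kappa^2}-1\bigr)$, which is too weak for the stated hypothesis; the proof must instead exploit that \emph{both} deviations are drawn from the single budget $MD(w_{EV},w_{GM})$, so their sum—not twice the maximum—is what is bounded by $\frac{1}{\kappa^2}-1$. I would make this accounting explicit, noting that because all the $|w_i^{EV}-w_i^{GM}|$ are nonnegative and sum to at most $\frac{1}{\kappa^2}-1$, the sum of any two of them is likewise at most $\frac{1}{\kappa^2}-1$. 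With that observation the displayed chain of inequalities closes, and the condition \eqref{eq:prop1eq} is exactly what guarantees rank preservation.
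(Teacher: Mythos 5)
Your proof is correct and takes essentially the same route as the paper's: the paper argues (informally, by contradiction) that swapping any adjacent pair would force the Manhattan distance to ``bridge'' the smallest gap $d$, contradicting $d > \frac{1}{\kappa^{2}}-1 \geq MD(w_{EV},w_{GM})$. Your version merely makes that accounting explicit and rigorous --- in particular the key observation that both coordinate deviations of an adjacent pair draw on the single budget $MD(w_{EV},w_{GM})$, so their \emph{sum} (not twice their maximum) stays below $d$ --- which is a welcome tightening of the paper's one-line sketch.
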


\begin{proof}
Let \(d = min \vert w_{i+1}^{EV} - w_{i}^{EV} \vert\). To obtain a
different priority vector \(w^{GM}\) from the GM method, it is necessary
that the (Manhattan) distance between \(w^{EV}\) and \(w^{GM}\) is
greater than \(d\) (this distance is necessary to "bridge the smallest
gap" between adjacent weights \(w_{i+1}^{EV}\) and \(w_{i+1}^{GM}\)).
However, this contradicts the assumption that
\(d > \dfrac{1}{\kappa^{2}}-1 \geq MD(w_{EV},w_{GM})\).
(The gap \(d\) is larger than the distance between \(w^{EV}\) and
\(w^{GM}\), hence it is impossible to be ``bridged''.)
\end{proof}

In real-world situations, only objects ranked at the top of the list are
important, and differences at the bottom of a ranking might be
irrelevant. An analogous proposition can be formulated for the change of
the best object.

\begin{Proposition}\label{prop2}
Let \(\mathbf{A}\) be a pairwise comparison matrix with Koczkodaj's
inconsistency index \(KI(\mathbf{A})\). Let \(\mathbf{w}^{EV}\) and
\(\mathbf{w}^{GM}\) be the vectors of weights (priority vectors of
compared objects) obtained via the EV method and GM method,
respectively. Let \(O_{1}, ..., O_{n}\) be the ordinal ranking of all
compared objects, from the best to the worst, obtained by the EV method,
hence \(w_{1}^{EV} \geq w_{2}^{EV} \geq ... \geq w_{n}^{EV}\). Further
on, let \(d^{*} = w_{1}^{EV} - w_{2}^{EV}\). Then, the object ranked
first by the EV method is also ranked first by the GM method, if
\(d^{*} > \dfrac{1}{\kappa^{2}}-1 \geq MD(w_{EV},w_{GM})\).
\end{Proposition}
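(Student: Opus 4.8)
The plan is to argue by contradiction, following the structure of the proof of Proposition \ref{prop1} but tracking only the top-ranked object. The key observation is that the chain of inequalities in the hypothesis, $d^{*} > \dfrac{1}{\kappa^{2}}-1 \geq MD(w_{EV},w_{GM})$, already incorporates Theorem 1 (its upper bound is exactly the middle quantity). Hence the statement reduces to the purely order-theoretic claim that $d^{*} > MD(w_{EV},w_{GM})$ forces $O_{1}$ to remain on top; the inconsistency-dependent quantity $\kappa$ plays no further role once this reduction is made.

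First I would assume, for contradiction, that $O_{1}$ is \emph{not} ranked first by the GM method. Since the rankings contain no ties, there is then an index $k \geq 2$ with $w_{k}^{GM} \geq w_{1}^{GM}$. Because the EV weights are sorted in decreasing order, $w_{2}^{EV} \geq w_{k}^{EV}$, so the gap $d^{*}$ at the top is a lower bound for the EV-gap between the first and the overtaking object:
\[ d^{*} = w_{1}^{EV} - w_{2}^{EV} \leq w_{1}^{EV} - w_{k}^{EV}. \]

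Next I would expand $w_{1}^{EV}-w_{k}^{EV}$ by inserting the corresponding GM weights and regrouping,
\[ w_{1}^{EV} - w_{k}^{EV} = (w_{1}^{EV} - w_{1}^{GM}) + (w_{1}^{GM} - w_{k}^{GM}) + (w_{k}^{GM} - w_{k}^{EV}). \]
The middle term is non-positive by the assumption $w_{k}^{GM} \geq w_{1}^{GM}$, so it may be discarded, and the two remaining terms are bounded above by their absolute values. This yields
\[ d^{*} \leq |w_{1}^{EV} - w_{1}^{GM}| + |w_{k}^{EV} - w_{k}^{GM}| \leq MD(w_{EV},w_{GM}), \]
where the last step uses that the sum of two coordinatewise absolute differences cannot exceed the full Manhattan distance. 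Together with the hypothesis this gives $d^{*} \leq MD(w_{EV},w_{GM}) < d^{*}$, a contradiction, so $O_{1}$ must be ranked first by the GM method.

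The only place requiring genuine care — and where this argument is cleaner than the informal ``bridge the smallest gap'' reasoning of Proposition \ref{prop1} — is that the object overtaking $O_{1}$ in the GM ranking need not be $O_{2}$, but may be any $O_{k}$ with $k \geq 2$. The resolution is precisely the first inequality above: $d^{*} = w_{1}^{EV}-w_{2}^{EV}$ is a lower bound for $w_{1}^{EV}-w_{k}^{EV}$ for \emph{every} such $k$, so the general case collapses to the same two-coordinate estimate regardless of which object does the overtaking. I expect this uniform handling of the overtaking index to be the main point to get right; the rest is a routine triangle-inequality manipulation.
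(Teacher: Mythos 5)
Your proof is correct and takes essentially the same route as the paper: the paper proves Proposition \ref{prop2} by declaring it analogous to Proposition \ref{prop1}, whose proof is precisely this contradiction argument that a change at the top of the ranking would force \(MD(w_{EV},w_{GM}) \geq d^{*}\), contradicting \(d^{*} > \frac{1}{\kappa^{2}}-1 \geq MD(w_{EV},w_{GM})\). Your write-up is simply a rigorous rendering of the paper's informal ``bridge the smallest gap'' step — in particular, the telescoping decomposition and the uniform treatment of an arbitrary overtaking index \(k \geq 2\) supply details the paper leaves implicit.
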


\begin{proof}
It is analogous the the proof of Proposition \ref{prop1}.
\end{proof}

It is obvious that Proposition \ref{prop2} is a special case of
Proposition \ref{prop1}.

To demonstrate the use of Propositions \ref{prop1} and \ref{prop2} we
provide a following simple, yet illustrative Example \ref{ex1}.

\begin{Example}\label{ex1}
Let's consider a PC matrix \(A\) of the order \(n = 3\), where weights
derived by the EV method are:
\(w = (w_{1}, w_{2}, w_{3})=(0.60, 0.30, 0.10)\), and \(KI(A) =0.03\).
We will show (prove) that the rank of the best object does not change in
the GM method, and also the entire rankings obtained by the EV and GM
methods are identical. First, let's evaluate the best object. We have
\(d^{*} = 0.30\). Now, we evaluate
\( \dfrac{1}{\kappa^{2}}-1 = \dfrac{1}{0.97^{2}}-1 = 0.062\).
Since \(d^{*} = 0.30 > 0.062\), then according to Proposition 2, the
best object obtained by the GM method is the same. As for the entire
ordinal ranking, we get:
\(d = min \vert w_{EV}^{i+1} - w_{EV}^{i} \vert = 0.20 > 0.062\), hence,
according to Proposition 1, the ranking by GM method must be identical.
\end{Example}

\begin{Remark}\label{remark1}
Provided that the weights of all alternatives sum up to 1, it is natural
that the distance \(d\) is also limited by \(1\). Indeed, for two
alternatives \(a_1\) and \(a_2\), such that \(a_1\prec a_2\), the
greatest possible \(d = w(a_{2})-w(a_{1})\) is \(1-\epsilon\), where
\(\epsilon\) is a small number greater than \(0\). However, for three
alternatives, the highest possible \(d\) can be achieved if the
priorities of different alternatives are evenly spaced from each other.
Indeed, a simple optimisation exercise \[\begin{aligned}
&\max d\\&\,\,\,\,\,\,\,\text{s.t.}\\&d=\min\left\{ w(a_{2})-w(a_{1}),w(a_{3})-w(a_{2})\right\} \\&w(a_{1})+w(a_{2})+w(a_{3})=1\\&w(a_{1})>w(a_{2})>w(a_{3})\geq0,\end{aligned}\]
indicates that \(d = 1/3\) providing that \(w(a_1)=2/3, w(a_2)=1/3,\)
and \(w(a_3)=0\). Since all of the weights have to be positive, then in
practice we have to assume \(w(a_3)=\epsilon\) and \(d=1/3 - \epsilon\).
Thus, the maximal \(d\) is upper-bounded by \(1/3\). Similar reasoning
can be repeated for more alternatives. For example, it is easy to
observe that for \(4\) alternatives \(d=1/6-\epsilon\), and similarly
for \(5\) alternatives, \(d=1/10-\epsilon\), etc. In general, for \(n\)
alternatives, \[d=\frac{1}{1+2+\ldots+n-1}-\epsilon.\] Thus, the more
alternatives, the smaller the allowed distance \(d\), and hence the
lower the required value of the inconsistency index \(\textit{KI}\).
\end{Remark}

Next, we will provide a consequence of Proposition 1 on the values of
Kendall's rank correlation coefficient \(\tau\) and Spearman's rank
correlation coefficient \(\rho\). Both coefficients measure the
similarity of two ordinal rankings, which is applied to the rankings
obtained by the EV and GM methods.

\begin{Proposition}\label{prop3}
Let \(\mathbf{A}\) be a pairwise comparison matrix with Koczkodaj's
inconsistency index \(KI(\mathbf{A})\). Let \(\mathbf{w}^{EV}\) and
\(\mathbf{w}^{GM}\) be the vectors of weights (priority vectors of
compared objects), obtained via the EV method and GM method,
respectively. Let \(O^{EV} = (O_{1}, ..., O_{n})\) be the ordinal
ranking of all objects, from the best to the worst, obtained by the EV
method and let \(O^{GM}\) be the ordinal ranking of all objects, from
the best to the worst, obtained by the GM method. Further on, let
\(d = min \lbrace d_{i} \rbrace\) be the smallest difference between
adjacent EVM weights and let \(\dfrac{1}{\kappa^{2}}-1 = K\). Let
\(k\) be an integer number such that \(0 \leq k \leq \binom{n}{2}\),
\(k\cdot d < K\) and \((k+1) \cdot d > K\) Then
\(\tau (O^{EV}, O^{GM}) \geq \frac{\binom{n}{2} - 2k}{\binom{n}{2}}\).
\end{Proposition}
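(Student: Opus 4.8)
The plan is to translate the claimed bound on Kendall's $\tau$ into a bound on the number of discordant pairs, and then to control that number using the Manhattan distance supplied by Theorem~1. Since the rankings contain no ties, every pair is either concordant or discordant, so $n_c + n_d = \binom{n}{2}$. Substituting $n_c = \binom{n}{2} - n_d$ into Definition~\ref{dfn:kendall} gives $\tau(O^{EV},O^{GM}) = \frac{\binom{n}{2} - 2n_d}{\binom{n}{2}}$, which is strictly decreasing in $n_d$. Hence the asserted inequality $\tau \ge \frac{\binom{n}{2} - 2k}{\binom{n}{2}}$ is equivalent to the single combinatorial statement $n_d \le k$, and the whole proof reduces to bounding the number of order reversals between the two rankings.

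Next I would import the metric information. By Theorem~1, $MD(w_{EV},w_{GM}) \le \frac{1}{\kappa^{2}}-1 = K$, so the two weight vectors are $\ell_1$-close whenever the matrix is not too inconsistent. The governing intuition, already used in the proof of Proposition~\ref{prop1}, is that each reversal of two objects forces their EV-weights to cross, and crossing the smallest admissible gap costs at least $d$ units of Manhattan distance. Promoting this intuition to $n_d$ reversals, one expects a lower bound of the form $n_d \cdot d \le MD(w_{EV},w_{GM})$. Granting it, the chain $n_d \cdot d \le MD \le K < (k+1)d$ (the last inequality being the hypothesis $(k+1)d > K$) yields $n_d < k+1$, i.e.\ $n_d \le k$ since $n_d$ is an integer, which is exactly what is needed.

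Concretely, to aim at $n_d \cdot d \le MD$ I would set $\delta_i = w^{GM}_{O_i} - w^{EV}_{O_i}$, so that $MD = \sum_i |\delta_i|$, and observe that a discordant pair $(i,j)$ with $i<j$ satisfies $w^{EV}_{O_i} - w^{EV}_{O_j} \ge d$ (the EV-weights are sorted, and every consecutive gap is at least $d$) while $w^{GM}_{O_i} < w^{GM}_{O_j}$. Adding these two facts gives $(w^{EV}_{O_i} - w^{GM}_{O_i}) + (w^{GM}_{O_j} - w^{EV}_{O_j}) \ge d$, and since the left-hand side is bounded above by $|\delta_i| + |\delta_j|$, every discordant pair contributes $|\delta_i| + |\delta_j| \ge d$. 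The natural route is then to sum this over all discordant pairs and redistribute the per-pair budget onto the individual coordinates.

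The hard part, and the step I expect to be the genuine obstacle, is precisely this redistribution. Summing $|\delta_i| + |\delta_j| \ge d$ over discordant pairs counts each coordinate once for every reversal in which it participates, so it only delivers $\sum_i c_i |\delta_i| \ge n_d\,d$ with multiplicities $c_i \ge 1$, not the clean $\sum_i |\delta_i| \ge n_d\,d$. When a single object is displaced far enough to leap over many others, or when several objects are nearly tied in the GM vector and cross almost simultaneously, one displacement pays for several reversals and the naive charge overcounts; indeed, a small direct computation with three evenly spaced EV-weights collapsed toward their median shows the clean inequality can fail as an abstract statement. Closing this gap therefore requires an amortised charging argument that assigns to each reversal a \emph{disjoint} share of at least $d$ of the total displacement, and I expect that to be the crux. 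If it cannot be carried out in full generality, the argument would have to invoke a structural property of genuine EV and GM weight vectors that excludes the near-tie configurations defeating the abstract bound, in order to secure $n_d \le k$ and hence the stated estimate on $\tau$.
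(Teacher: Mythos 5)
Your proposal follows the same strategy as the paper's own proof: rewrite Kendall's \(\tau\) as \(\tau=\bigl(\binom{n}{2}-2n_d\bigr)/\binom{n}{2}\) so that the claim becomes \(n_d\le k\), then try to argue that each discordant pair costs at least \(d\) of Manhattan distance, so that \(n_d\,d\le MD(w_{EV},w_{GM})\le K<(k+1)d\) forces \(n_d\le k\). Your reduction and your per-pair inequality \(|\delta_i|+|\delta_j|\ge d\) are both correct. The obstacle you flag is genuine, and your suspected counterexample works in full detail: take \(w^{EV}=(1/3+d,\;1/3,\;1/3-d)\) and \(w^{GM}=(1/3-\epsilon,\;1/3,\;1/3+\epsilon)\) with \(0<\epsilon<d/2\). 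Both vectors are normalized, the minimal EV gap is \(d\), and the GM order is the exact reversal of the EV order, so \(n_d=3\), yet \(MD=2d+2\epsilon<3d\). Hence the clean charge \(n_d\,d\le MD\) is false; worse, the proposition's conclusion itself fails for such abstract vectors: if \(K\) sits just above \(MD\) (say \(K=2d+3\epsilon\), giving \(k=2\)), the proposition would promise \(\tau\ge-1/3\) while actually \(\tau=-1\). Consequently, no argument that uses only the Manhattan bound of Theorem 1 together with the sortedness/gap structure of \(w^{EV}\) can establish the statement; a complete proof would need additional structural properties of genuine EV and GM vectors (for \(n=3\) the two rankings are known to coincide, which is why this configuration cannot actually arise there, but nothing of the sort is invoked for \(n>3\)).

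The comparison with the paper is therefore somewhat awkward: the paper's proof of Proposition \ref{prop3} \emph{is} the naive charging argument you declined to accept. It asserts that for \(k=1\) ``at most one difference (one discordant pair) in the rankings can occur'' and, in general, that \(k\) ``denotes the maximum number of differences in the ranking that can occur,'' which implicitly assumes that \(m\) discordant pairs require Manhattan distance at least \(m\,d\) --- precisely the inequality your example refutes. (It also silently identifies discordant pairs with adjacent swaps; their numbers agree via the bubble-sort interpretation of the Kendall distance, but the per-swap cost of \(d\) is again unjustified once intermediate orders differ from \(O^{EV}\).) So your proposal does not prove the proposition, but it is not behind the paper either: you located, and made concrete, exactly the point at which the published proof is informal, and you correctly identified what a repair would require --- an amortised, disjoint assignment of displacement to reversals, or structural input about EV/GM vectors beyond Theorem 1.
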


\begin{proof}
For \(k = 0\), we get \(0 < K\) and \(d > K\), hence no change in the
ranking can occur (see the proof of Proposition 1). Therefore,
\(\tau (O^{EV}, O^{GM}) \geq \frac{\binom{n}{2} - 0}{\binom{n}{2}} = 1\),
and the EVM and GMM rankings are identical. For \(k = 1\) we get
\(d < K\) and \(2d > K\); hence, at most one difference (one discordant
pair) in the rankings can occur, which means that one originally
concordant pair changes into a discordant pair and the numerator
decreases by two, therefore
\(\tau (O^{EV}, O^{GM}) \geq \frac{\binom{n}{2} - 2}{\binom{n}{2}}\).
The number \(k\) denotes the maximum number of differences in the
ranking that can occur, which is equal to the maximum number of
discordant pairs that can appear (and replace concordant pairs).
Therefore, in general,
\(\tau (O^{EV}, O^{GM}) \geq \frac{\binom{n}{2} - 2k}{\binom{n}{2}}\).
\end{proof}

Proposition \ref{prop3} postulates how much the EV and GM rankings can
differ in terms of Kendall's \(\tau\) (in other words, it postulates its
lower bound on \(\tau\)). The lower is the upper boundary (\(K\)) given
by Theorem 1 on the difference between \(w^{EV}\) and \(w^{GM}\) (in
other words, the lower the inconsistency of the original PC matrix), and
the larger the differences between adjacent weights in \(w^{EV}\), the
higher is the value of Kendall's \(\tau\).

\begin{Example}\label{ex2}
Estimate Kendall's \(\tau\) between EVM and GMM rankings if a PC matrix
\(A\) is of the order \(n = 5\), \(KI(A) = 0.11\) and \(d = 0.08\).

First, we evaluate
\(K = \dfrac{1}{\kappa^{2}}-1 = \dfrac{1}{0.89^{2}}-1 = 0.262\).
Next, we find the value of \(k\) such that
\(0 \leq k \leq \binom{n}{2}\), \(k\cdot d < K\) and
\((k+1) \cdot d > K\), which is \(k = 3\). Therefore, according to
Proposition \ref{prop3},
\(\tau \geq \frac{\binom{5}{2} - 6}{\binom{5}{2}} = 0.40\).
\end{Example}

\begin{Proposition}\label{prop4}
Let \(\mathbf{A}\) be a pairwise comparison matrix with Koczkodaj's
inconsistency index \(KI(\mathbf{A})\). Let \(\mathbf{w}^{EV}\) and
\(\mathbf{w}^{GM}\) be the vectors of weights (priority vectors of
compared objects) obtained via the EV method and GM method,
respectively. Let \(O^{EV} = (O_{1}, ..., O_{n})\) be the ordinal
ranking of all objects, from the best to the worst, obtained by the EV
method, and let \(O^{GM}\) be the ordinal ranking of all objects, from
the best to the worst, obtained by the GM method. Further on, let
\(d = min \lbrace d_{i} \rbrace\) be the smallest difference between
adjacent EVM weights and let \(\dfrac{1}{\kappa^{2}}-1 = K\). Let
\(k\) be an integer number such that \(0 \leq k \leq \binom{n}{2}\),
\(k\cdot d < K\) and \((k+1) \cdot d > K\). Then,
\(\rho(O^{EV},O^{GM}) \geq 1-\frac{6(k^{2}+k)}{n(n^{2}-1)}\).
\end{Proposition}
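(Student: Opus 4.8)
The plan is to follow the same budget argument as in Proposition \ref{prop3} to control the number of discordant pairs, and then convert that bound into a bound on Spearman's $\rho$ by identifying the worst-case rank configuration. To avoid the notation clash between the weight gaps $d_i$ appearing in the statement and the rank differences of Definition \ref{dfn:spearman}, I will write $\delta_i$ for the difference between the rank of object $i$ in $O^{EV}$ and its rank in $O^{GM}$, so that $\rho(O^{EV},O^{GM}) = 1 - \frac{6\sum_{i=1}^{n}\delta_i^2}{n(n^2-1)}$. Since $\rho$ is a strictly decreasing function of $\sum_i \delta_i^2$, it suffices to produce an \emph{upper} bound on $\sum_i \delta_i^2$.

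First I would reuse the reasoning behind Propositions \ref{prop1} and \ref{prop3}: each reordering of two adjacent objects requires the Manhattan distance between $w^{EV}$ and $w^{GM}$ to absorb a gap of at least $d$, while the total available budget is $MD(w^{EV},w^{GM}) \le K = \frac{1}{\kappa^2}-1$ by the upper bound of Theorem 1. With $k$ chosen so that $k\cdot d < K$ and $(k+1)\cdot d > K$, at most $k$ such gaps can be bridged, so the number of discordant pairs between $O^{EV}$ and $O^{GM}$ is at most $k$. Equivalently, the permutation carrying $O^{EV}$ to $O^{GM}$ has at most $k$ inversions.

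The key step is then a purely combinatorial claim: \emph{among all permutations of $n$ objects with at most $k$ inversions, the quantity $\sum_i \delta_i^2$ is maximised by displacing a single object by exactly $k$ ranks, giving $\sum_i \delta_i^2 \le k^2 + k$.} The extremal configuration is transparent: if one object moves $k$ positions down (or up) the ranking, it contributes $\delta^2 = k^2$, while each of the $k$ objects it passes shifts by exactly one rank and contributes $1$, for a total of $k^2 + k$, using precisely $k$ inversions. I expect this to be the main obstacle, since I must rule out every competing permutation with at most $k$ inversions. The mechanism is a convexity/exchange argument: if the inversions are instead spread over two disjoint displacements of sizes $k_1$ and $k_2$ with $k_1 + k_2 \le k$, the sum becomes $(k_1^2+k_1)+(k_2^2+k_2) = k_1^2 + k_2^2 + (k_1+k_2) \le k^2 + k$, because $k_1^2 + k_2^2 \le (k_1+k_2)^2 \le k^2$; so concentrating displacement into one object can only increase $\sum_i \delta_i^2$ for a fixed inversion budget. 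To make this rigorous for arbitrary, possibly overlapping, displacement patterns I would argue by induction on the inversion count, at each step peeling off the object of maximal displacement $m$, using the elementary fact that an object displaced by $m$ participates in at least $m$ inversions, and showing that removing it and closing the gap decreases the inversion count and $\sum_i \delta_i^2$ consistently with the target bound.

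Finally, combining the two bounds, at most $k$ inversions forces $\sum_i \delta_i^2 \le k^2 + k$, and substituting into Spearman's formula gives $\rho(O^{EV},O^{GM}) = 1 - \frac{6\sum_i \delta_i^2}{n(n^2-1)} \ge 1 - \frac{6(k^2+k)}{n(n^2-1)}$, as claimed. As a sanity check, the case $k=0$ returns $\rho = 1$, recovering the identical-ranking conclusion of Proposition \ref{prop1}.
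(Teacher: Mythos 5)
Your proposal takes essentially the same route as the paper's own proof: cap the number of discordant pairs at $k$ via the budget argument of Propositions \ref{prop1} and \ref{prop3}, identify the worst case as one object displaced by $k$ ranks while $k$ other objects each shift by one rank, conclude $\sum_i \delta_i^2 \le k^2+k$, and substitute into Spearman's formula. The real difference lies in rigor at the combinatorial step: the paper merely \emph{asserts} the extremal configuration (``one object changes its rank by not more than $k$ positions \ldots while $k$ other objects change their rank by 1''), whereas you correctly flag that one must rule out every competing permutation with at most $k$ inversions, and you attempt to do so. (Your renaming of the rank differences to $\delta_i$ also repairs a genuine notational clash in the paper, which uses $d_i$ both for the weight gaps and for the rank differences in Definition \ref{dfn:spearman}.)

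However, the peeling induction you sketch for overlapping displacement patterns is not sound as stated. Deleting the object of maximal displacement and closing the gap does \emph{not} make the remaining displacements behave monotonically: every object that the deleted one crossed can have its displacement \emph{increase} by one. Concretely, if the EV order is $(1,2,3,4)$ and the GM order is $(3,4,2,1)$, then object $1$ has the maximal displacement $3$; after deleting it, object $2$'s displacement grows from $1$ to $2$. The induction can be repaired with more careful bookkeeping, but a cleaner way to close the gap is the following: let $v_i$ be the number of discordant pairs involving object $i$; then $\delta_i \le v_i$, since an object displaced by $\delta_i$ ranks must cross at least $\delta_i$ others. Viewing the discordant pairs as the $I \le k$ edges of a simple graph with degrees $v_i$, every edge $\{u,w\}$ satisfies $v_u + v_w \le I+1$, and $\sum_i v_i^2$ equals the sum of $v_u+v_w$ over all edges, hence $\sum_i \delta_i^2 \le \sum_i v_i^2 \le I(I+1) \le k^2+k$. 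With that lemma in place your argument is complete --- and, unlike the paper's, it is an actual proof of the key inequality rather than an appeal to an unverified extremal picture.
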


\begin{proof}
\emph{Proof.} Let's start with the value of \(\sum_{i-1}^{n}d_{i}^{2}\).
This value is the sum of quadratic differences in both rankings. For a
given, \(k\) -- the maximal feasible number of adjacent changes (swaps)
between both rankings -- is the maximal value of \(D\) obtained when one
object changes its rank by not more than \(k\) positions (since this
change is squared in \(D\)), while \(k\) other objects change their rank
by 1. This gives \(D_{max} = k^{2}+k\). Since actual \(D\) is smaller
than or equal to \(D_{max}\), we obtain the desired formula -- a lower
bound for Spearman's \(\rho\).
\end{proof}

In the next example, the use of Proposition \ref{prop4} is demonstrated.

\begin{Example}\label{ex3}
Estimate Spearman's \(\rho\) between EVM and GMM rankings, if a PC
matrix \(A\) is of the order \(n = 5\), \(KI(A) = 0.07\) and
\(d = 0.07\). First, we evaluate
\(K = \dfrac{1}{\kappa^{2}}-1 = \dfrac{1}{0.93^{2}}-1 = 0.1562\).
Next, we find the value of \(k\) such that
\(0 \leq k \leq \binom{n}{2}\), \(k\cdot d < K\) and
\((k+1) \cdot d > K\), which is \(k = 2\). Therefore, according to
Proposition \ref{prop4},
\(\rho (O^{EV},O^{GM}) \geq 1-\frac{6(k^{2}+k)}{n(n^{2}-1)} = 1-\frac{6(2^{2}+2)}{5(5^{2}-1)} = 0.70\).
\end{Example}

\section{Monte Carlo analysis}\label{sec:montecarlo}

The first proposition (\ref{sec:ord-rank-similarity}) formulates
sufficient conditions for the EVM and GMM rankings to be identical in an
ordinal sense. Indeed, if \(d\) (the smallest distance between
subsequent priorities in \(w_{\textit{EV}}\)) is greater than
\(1/\kappa^{2}-1\) then the orders of alternatives
determined by \(w_{\textit{EV}}\) and \(w_{\textit{GM}}\) are identical.
In practice, however, this guarantee is difficult to obtain. This is due
to two reasons. The first of them was mentioned in Remark \ref{remark1},
according to which the more alternatives, the smaller the possible
\(d\). Similarly, the more alternatives, the greater the right side of
\ref{eq:prop1eq} is. It suggests that the effect described by
Proposition \ref{prop1} can be observed with a small number of
alternatives. The second reason stems from the local nature of
Koczkodaj's index \(KI\). For the value of this index to be high, it is
enough for an expert to make a mistake in only one comparison. Hence, a
matrix with a relatively low Saaty's \(CI\) but a relatively high
Koczkodaj's \(\textit{KI}\) can be found quite often in practice.

\begin{figure}[h!]
\centering
\includegraphics[width=0.7\textwidth]{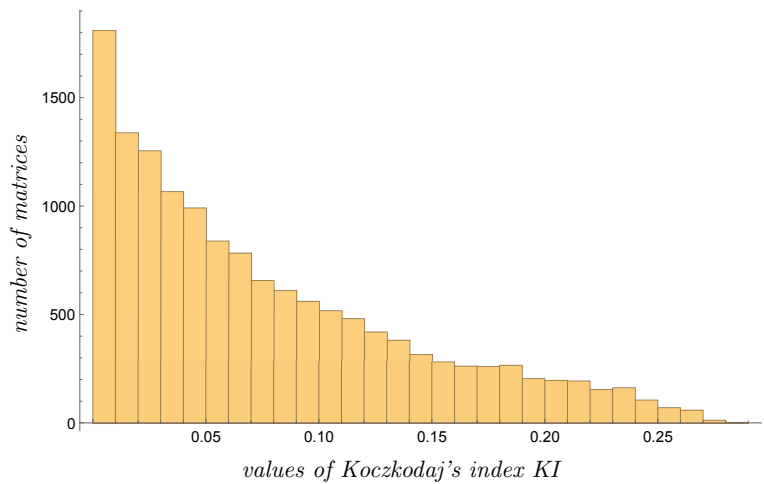}
\caption{Distribution of randomly generated PC matrices \(3\times3\) for
which condition (\ref{eq:prop1eq}) is met with respect to the value of
\(\textit{KI}\).}\label{fig:hist3x3}
\end{figure}

In the presented Monte Carlo analysis, we generate 362,750 matrices with
varying degrees of disturbance\footnote{We prepare 250 consistent random matrices and then disturb them using the disturbance coefficient from $1, 1.02, 1.04,$~up to $30$. Hence, the total number of matrices in the study is $250 * 1451 = 362,750$.}. We generate them in such a way that each
comparison in an initially connected matrix is multiplied by some scalar
\(\alpha \in [1,\beta]\), where the value \(\beta\) is gradually
increased. 

\begin{figure}[h!]
\centering
\includegraphics[width=0.7\textwidth]{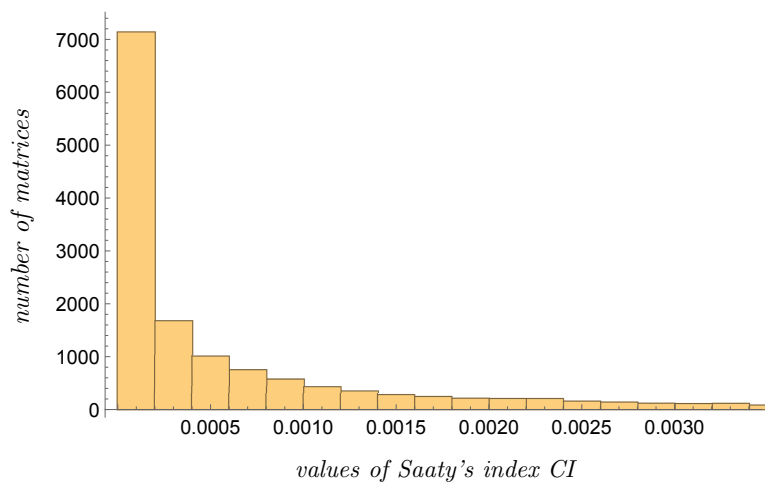}
\caption{Distribution of randomly generated PC matrices \(3\times3\) for
which condition (\ref{eq:prop1eq}) is met with respect to the value of
\(\textit{CI}\).}\label{fig:hist3x3Saaty}
\end{figure}

For these matrices, we check for how many of them the
condition in \ref{eq:prop1eq} is met, depending on the value of the
inconsistency index \(\textit{KI}\). In \ref{fig:hist3x3}, we can see
that in the case of a \(3\times 3\) matrix, the higher the inconsistency
index \(\textit{KI}\), the smaller the number of cases in which the
postulate from Proposition \ref{prop1} is satisfied. Moreover, out of
the total of 362,750 \(3\times3\) matrices, only for 14,262, i.e.~for
\(3.93\%\) of all matrices, this condition was met. These values
significantly decrease for matrices of \(4\times4\) size. With the same
number of analyzed matrices, i.e.~362,750, the sufficient criterion
(\ref{eq:prop1eq}) is met only in 636 cases (\(0.175\%\) of analyzed
cases).

\begin{figure}[h!]
\centering
\includegraphics[width=0.7\textwidth]{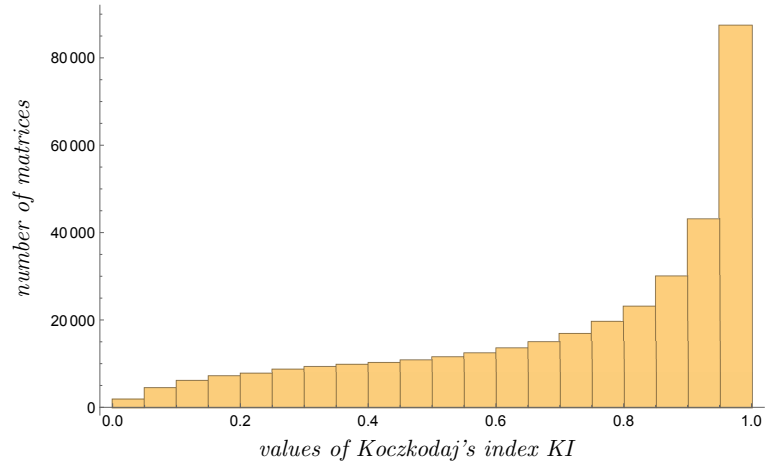}
\caption{Distribution of randomly generated PC matrices \(3\times3\) for
which condition (\ref{eq:prop1eq}) is not met with respect to the value
of \(\textit{KI}\).}\label{fig:hist3x3NOKKocz}
\end{figure}

\begin{figure}[h!]
\centering
\includegraphics[width=0.7\textwidth]{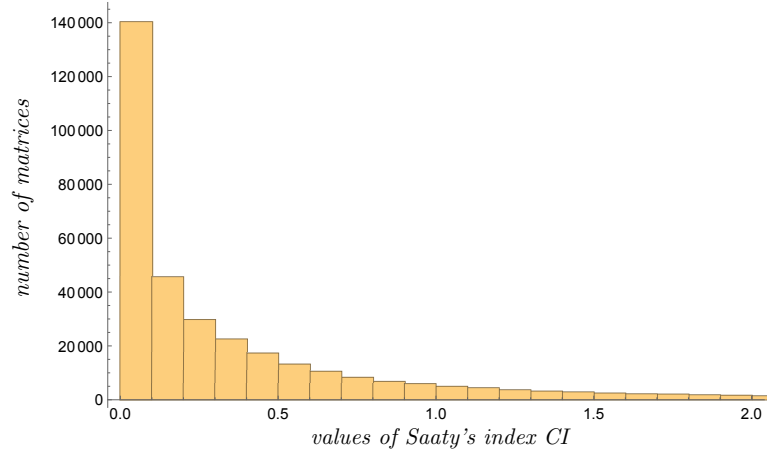}
\caption{Distribution of randomly generated PC matrices \(3\times3\) for
which condition (\ref{eq:prop1eq}) is not met with respect to the value
of \(\textit{CI}\).}\label{fig:hist3x3SaatyNOK}
\end{figure}

The values of the Saaty index for which the condition
(\ref{eq:prop1eq}) is met are low. Here, we can also see the regularity
according to which the higher the inconsistency, the fewer cases in
which the condition is met (\ref{fig:hist3x3Saaty}).

Since the proposed condition is sufficient but not necessary, there are
often matrices for which this condition is not met, and yet the order of
alternatives induced by both vectors \(w_{\textit{EV}}\) and
\(w_{\textit{GM}}\) is identical. Among these cases, the most numerous
group are matrices with a high value of the Koczkodaj's inconsistency
index (\ref{fig:hist3x3NOKKocz}).

This result is somewhat surprising, since it is pretty straightforward
to generate a matrix that is highly inconsistent in terms of
\(\textit{KI}\) -- it is enough to disturb only one comparison in the
matrix significantly. As expected, the number of PC matrices for which
the order induced by both vectors \(w_{\textit{EV}}\) and
\(w_{\textit{GM}}\) is identical decreases as the inconsistency measured
by \(\textit{CI}\) rises.

The conducted experiments show that the criterion based on Koczkodaj's
index is quite restrictive. As a result, its practical usefulness is
moderate. However, thanks to this restrictiveness, it was possible to
formulate an analytical condition for the ordinal identity of vectors
calculated using two different ranking methods. The large number of
cases not covered by the criterion (\ref{eq:prop1eq}) for which the
order induced by both ranking methods is identical may suggest that
there is a better criterion for the ordinal agreement of the vectors.
Unfortunately, at the moment, such a criterion is unknown to the authors
of this study.

\section{Conclusions}\label{sec:conclusions}

In the AHP method, the order of the alternatives is often more critical
than their numerical priorities. Therefore, in our study, we focused on
the ordinal aspect of the ranking calculated using the two methods, EVM
and GMM. We proposed a sufficient criterion for the ordinal compliance
of these two methods. We also estimated the Kendall's \(\tau\)
coefficient under certain assumptions. A Monte Carlo analysis accompanies
the theoretical considerations. It shows that the nature of the
theoretically formulated criteria is relatively restrictive. I.e. the set 
of pairwise comparisons must have a really high consistency in order to 
have guaranteed stability of the result under criterion (\ref{eq:prop1eq}). 
For example it is easy to see (Fig. \ref{fig:hist3x3}) that most $3\times 3$ matrices 
that meet the criterion have a CI smaller than $10^{-2}$. It is not much. 
On the other hand, a small (acceptable) inconsistency also does not guarantee 
that the criterion will be met (Fig. \ref{fig:hist3x3SaatyNOK}).
This restrictiveness limits the practical applicability of the
obtained academic results; on the other, it suggests the that better
criteria, applicable in more cases, may be determined. The search for
such criteria will be the subject of further research by the authors.

\section*{Acknowledgements}\label{sec:ack}

Ji\v{r}\'{i} Mazurek was supported by the Czech Grant Agency (GACR) no.
21-03085S. 


\section*{Literature}

\bibliography{papers-biblio-reviewed.bib,papers-se.bib,papers-jm.bib}

\end{document}